\newtheorem{thm}{{\bf  Theorem}}
\newtheorem{cor}{{\bf  Corollary}}
\newtheorem{prepos}{{\bf  Preposition}}
\newtheorem{preexample}{{\bf Example}}
\newtheorem{preconj}{{\bf  Conjecture}}
\newtheorem{preremark}{{\bf  Remark}}
\newtheorem{lem}{{\bf  Lemma}}
\begin{document}

\title{\bf On edge-decomposition of cubic graphs into copies of the double-star with four edges 
\thanks
{{\it Key Words}:  edge-decomposition, double-star, cubic graph, regular graph, bipartite graph}
\thanks {2010{ \it Mathematics Subject Classification}: 05C51, 05C05
 }}

\author{{\normalsize
{\sc S. Akbari${}^{\mathsf{a}, \mathsf{c}}$},\,
  {\sc H. R. Maimani${}^{\mathsf{b}, \mathsf{c}}$}\,
  {\sc and A. Seify${}^{\mathsf{b}, \mathsf{c}}$},\,}
 \vspace{3mm}
\\{\footnotesize{${}^{\mathsf{a}}$\it Department of
Mathematical Sciences, Sharif University of Technology, Tehran,
Iran}}
{\footnotesize{}}\\{\footnotesize{${}^{\mathsf{b}}$\it Department of Science, Shahid Rajaee Teacher Training University, Tehran, Iran}}
{\footnotesize{}}\\{\footnotesize{${}^{\mathsf{c}}$\it School of Mathematics, Institute for Research in Fundamental Sciences (IPM),}}{\footnotesize{}}\\{\footnotesize{${}^{\mathsf{}}$\it
P.O. Box 19395-5746,
 Tehran, Iran.}}
\thanks{{\it E-mail addresses}: $\mathsf{s\_akbari@sharif.edu}$, $\mathsf{maimani@ipm.ir}$ and $\mathsf{abbas.seify@gmail.com}$.} }

\date{}

\maketitle

\begin{abstract}
A tree containing exactly two non-pendant vertices is called a double-star. Let $k_1$ and $k_2$ be two positive integers. The double-star with degree sequence $(k_1+1, k_2+1, 1, \ldots, 1)$ is denoted by $S_{k_1, k_2}$. If $G$ is a cubic graph and has an $S$-decomposition, for a double-star $S$, then $S$ is isomorphic to $S_{1,1}$, $S_{1,2}$ or $S_{2,2}$. It is known that a cubic graph has an $S_{1,1}$-decomposition if and only if it contains a perfect matching. In this paper, we study the $S_{1,2}$-decomposition of cubic graphs. First, we present some necessary conditions for the existence of an $S_{1, 2}$-decomposition in cubic graphs. Then we prove that every $\{C_3, C_5, C_7\}$-free cubic graph of order $n$ with $\alpha(G)=\frac{3n}{8}$ has an $S_{1, 2}$-decomposition, where $\alpha(G)$ denotes the independence number of $G$. Finally, we obtain some results on the $S_{1, r-1}$-decomposition of $r$-regular graphs. 
\end{abstract}

\section{Introduction}
Let $G=(V(G),E(G))$ be a graph and $v \in V(G)$. We denote the set of all neighbors of $v$ by $N(v)$ and for $X \subseteq V(G)$ we define $N(X)= \cup _{x \in X} N(x)$. Also, we denote the neighbors of $X$ in $S$ by $N_S(X)=N(X) \cap S$. 
An {\it independent set} is a set of vertices in a graph such that no two of which are adjacent. The {\it independence number} $\alpha(G)$ is the size of a largest independent set in $G$. A {\it dominating set} of $G$  is a subset $D$ such that every vertex not in $D$ is adjacent to at least one vertex in $D$. The {\it domination number} $\gamma(G)$ is the size of a smallest dominating set in $G$. A subset $S \subseteq V(G)$ in which all components of $G\setminus S$ are cycles is called a {\it cycling set}. Moreover, if $S$ is an independent set, then we say that $S$ is an {\it independent cycling set}. We denote the number of path components of $G$ by $n(P,G)$.\\
A subset $M \subseteq E(G)$ is called a {\it matching}, if no two edges of $M$ are incident. A matching $M$ is called a {\it perfect matching}, if every vertex of $G$ is incident with some edge in $M$. Hall proved that a bipartite graph $G=(A,B)$ has a matching saturates $A$ if and only if for every $S \subseteq A$ we have $|N_B(S)| \geq |S|$, see \cite{kano} and \cite{Bondy}. 
\\
A graph $G$ has an $H$-{\it decomposition}, if all edges of $G$ can be decomposed into subgraphs isomorphic to $H$. If $G$ has an $H$-decomposition, then we say that $G$ is $H$-{\it decomposable}. A tree with exactly two non-pendant vertices is called a double-star. Let $k_1$ and $k_2$ be two positive integers. The double-star with degree sequence $(k_1+1, k_2+1, 1, \ldots, 1)$ is denoted by $S_{k_1, k_2}$. A vertex of degree $i$ is called an $i$-{\it vertex}. If $G$ is an $r$-regular graph with an $S_{1, r-1}$-decomposition and $S \subseteq V(G)$ is the set of all $r$-vertices of this decomposition, then we say that $G$ is $(S_{1, r-1},S)$-decomposable.  
\\
\begin{figure}[h]
\centering{\includegraphics[width=23mm]{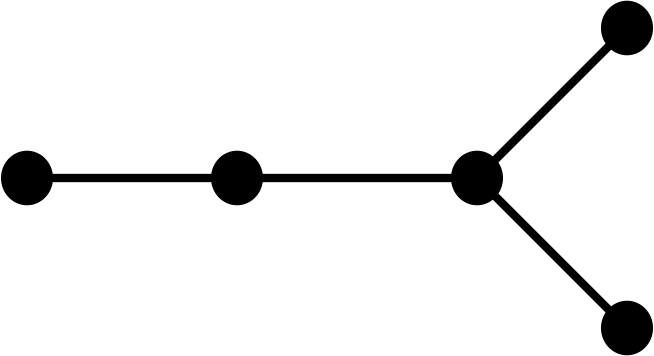}} \\
Figure1. $S_{1,2}$.
\end{figure}
\\
Tree decomposition of highly connected graphs is studied in \cite{Y}, \cite{Thomassen 1} and \cite{Thomassen 2}. In \cite{Y} it has been shown that every 191-edge-connected graph, whose size is divisible by 4 has an $S_{1, 2}$-decomposition.
\\
Let $G$ be a cubic graph. If $G$ is $S$-decomposable and $S$ is a double-star, then $S$ is isomorphic to $S_{1,1}$ or $S_{1,2}$ or $S_{2,2}$, because otherwise $S$ has a vertex of degree at least four. It was proved that a cubic graph has an $S_{1,1}$-decomposition if and only if it contains a perfect matching, see \cite{Kotzig}. In this paper, we study edge-decomposition of cubic graphs into copies of $S_{1,2}$. 
\\
This paper is organized as follows. In Section 2, we study $S_{1,2}$-decomposition of cubic graphs and provide some necessary and some sufficient conditions for the existence of an $S_{1,2}$-decomposition in cubic graphs. In Section 3, we obtain some results on $S_{1, r-1}$-decomposition of $r$-regular graphs.

\section{$S_{1, 2}$-Decomposition of Cubic Graphs}
In this section, we present some necessary and some sufficient conditions for the existence of $S_{1,2}$-decompositions in cubic graphs. Finally, we study $R$-decomposition in cubic graphs.
\\
Let $G$ be a cubic graph and $S \subseteq V(G)$. The question is that whether $G$ is $(S_{1,2},S)$-decomposable or not? For giving a response to this question, we need a new bipartite graph $H=(S,L)$, in which $S$ is the set of all 3-vertices of $S_{1,2}$-trees and for each edge $e \in E(G\setminus S)$, we put a vertex $u_e$ in $L$. Two vertices $s_i$ and $u_{e_j}$ are adjacent in $H$ if and only if there exists an edge $e \in E(G)$ such that one end of $e$ is $s_i$ and moreover $e$ and $e_j$ have a common end vertex. This means that $u_{e_j}$ and $s_i$ are adjacent in $H$ if and only if we can obtain an $S_{1,2}$ by adding $e_j$ to a claw containing $s_i$ as a central vertex. We have the following.

\begin{lem}\label{lem1}
Let $G$ be a cubic graph of order $n$. Then $G$ is $(S_{1,2},S)$-decomposable if and only if $|S|=\frac{3n}{8}$ and $H=(S,L)$ has a perfect matching.
\end{lem}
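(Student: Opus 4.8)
The plan is to reduce both directions to a single structural correspondence between $S_{1,2}$-decompositions with $3$-center set $S$ and perfect matchings of $H$. First I would nail down the two numerical facts. Since $S_{1,2}$ has four edges and $G$ has $\frac{3n}{2}$ edges, every decomposition uses exactly $\frac{3n}{8}$ copies. Each copy has a unique $3$-center, and no vertex can be the $3$-center of two copies (a $3$-center uses all three of its incident edges in its own copy, and $G$ is cubic), so distinct copies have distinct centers and $|S|=\frac{3n}{8}$, which is the first necessary condition. Next I would show $S$ is independent: an edge with both endpoints in $S$ would have to belong to the claw centered at each of its two endpoints, contradicting edge-disjointness of the decomposition.

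With independence in hand, I would describe the local picture at each $s\in S$. The copy centered at $s$ must consist of all three edges at $s$ (a claw) together with exactly one extension edge $e_s$ that turns the claw into an $S_{1,2}$. This $e_s$ is incident to a neighbor of $s$ and, since $S$ is independent, has its other endpoint outside $S$ as well, so $e_s\in E(G\setminus S)$; by the defining adjacency of $H$ this is precisely the statement that $s$ and $u_{e_s}$ are adjacent. Distinct copies yield distinct extension edges, so a decomposition produces a matching $s\mapsto u_{e_s}$ of $H$ saturating $S$.

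To upgrade this to a perfect matching I would use the key count: independence gives exactly $3|S|$ edges incident to $S$, whence $|E(G\setminus S)|=\frac{3n}{2}-3|S|=\frac{3n}{8}=|S|=|L|$. Thus the $S$-saturating matching is in fact perfect, proving the ``only if'' direction. For the converse, given $|S|=\frac{3n}{8}$ and a perfect matching $M$ of $H$, I would reverse the construction: for each $s$, matched to a unique $u_{e_s}$, form the $S_{1,2}$ consisting of the claw at $s$ plus $e_s$ (a genuine $S_{1,2}$ exactly because $s\sim u_{e_s}$ in $H$). Edge-disjointness of the claws follows from independence, since every $S$-incident edge lies in a single claw; the extension edges are pairwise distinct and lie in $E(G\setminus S)$, hence disjoint from all claw edges; and the equality $|E(G\setminus S)|=|S|$ forces every non-$S$-incident edge to occur as some $e_s$, so the copies partition all of $E(G)$. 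I would also remark that requiring a \emph{perfect} matching forces $|S|=|L|$, which—rerunning the count with internal edges of $S$—recovers independence automatically, so it need not be assumed separately.

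The step I expect to require the most care is the bookkeeping that ties the abstract adjacency of $H$ to the geometry of the decomposition: one must check that adjacency of $s_i$ and $u_{e_j}$ corresponds exactly to ``attaching $e_j$ to a claw at $s_i$ yields a genuine copy of $S_{1,2}$'' (in particular that the extension edge introduces a fresh vertex rather than closing a short cycle), and that the two counts—$3|S|$ edges incident to $S$ and $|S|$ edges in $E(G\setminus S)$—line up so that a perfect matching gives a partition with no edge omitted or repeated. Once this accounting is in place, both implications follow directly.
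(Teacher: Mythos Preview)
Your proposal is correct and follows the same route as the paper: associate to each $3$-center $s$ its unique extension edge $e_s\in E(G\setminus S)$, and observe that this correspondence is a perfect matching of $H$ precisely when the copies partition $E(G)$. The paper's own proof is far terser (a handful of lines) and leaves implicit several points you spell out---the independence of $S$, the count $|L|=|S|$, and the check that the claw-plus-edge is a genuine $S_{1,2}$---so your write-up is, if anything, more complete than the original.
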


\begin{proof}
Clearly, if $G$ is $(S_{1,2},S)$-decomposable, then $|S|=\frac{3n}{8}$. Also, note that if $e_s=uv$ is an edge in some $S_{1,2}$ with $s$ as a 3-vertex and moreover $u$ and $v$ are 2-vertex and 1-vertex of this double-star, respectively. Then $e_s\in L$ and $\{(s,e_s): s\in S\}$ is a perfect matching in $H$.
\\ 
Conversely, if there exists a perfect matching $M=\{(s,e_{s}): s\in S\}$, then one can obtain an $S_{1,2}$-decomposition.
\end{proof}

In the following lemma we provide some necessary conditions for $S_{1,2}$-decomposition of cubic graphs.

\begin{lem}\label{lem2}
Let $G$ be a cubic graph of order $n$ which has a $S_{1,2}$-decomposition. Then the following hold:
\vspace{0.6em}
\\
(i)   $8\;|\;n$.
\vspace{0.4em}
\\
(ii) There exists an independent set $S \subset V(G)$ such that:

1- $|S| \geq \frac{3n}{8}$,

2- Each component of $G\setminus S$ is either a cycle or a tree,

3- No component of $G\setminus S$ has two $3$-vertices.
\vspace{0.4em}
\\
(iii) There exists an independent set $T\subseteq V(G\setminus S)$ with $|T|=\frac{n}{4}$.
\end{lem}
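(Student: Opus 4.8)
The plan is to take $S$ to be the set of all $3$-vertices occurring in the given $S_{1,2}$-decomposition and to read off every structural statement from the way the four edges of each copy of $S_{1,2}$ are distributed among the vertices. Part (i) is pure edge-counting: a cubic graph of order $n$ has $\frac{3n}{2}$ edges, and since $S_{1,2}$ has $4$ edges, the decomposition consists of exactly $\frac{3n}{8}$ copies; as this is a positive integer and $\gcd(3,8)=1$, I get $8\mid n$.

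For part (ii), I would first note that a vertex serving as the $3$-vertex of a copy spends all three of its edges there, so it plays this role in at most one copy; hence $|S|$ equals the number of copies, namely $\frac{3n}{8}$, which gives condition~1. If two members of $S$ were adjacent, the joining edge would have to lie in both their copies, contradicting edge-disjointness, so $S$ is independent. The key device for conditions~2 and~3 is the observation that each copy contributes exactly one edge to $G\setminus S$, namely the pendant edge $a\ell_0$ joining its $2$-vertex $a$ to the leaf $\ell_0$ attached to $a$ (the other three edges meet the $3$-vertex, hence $S$). This yields a bijection between the copies and $E(G\setminus S)$, and I would orient each such edge from its $2$-vertex toward its leaf. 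Since acting as a $2$-vertex consumes two of a vertex's three edges, each vertex is a $2$-vertex in at most one copy, so every vertex has out-degree at most $1$ in this orientation.

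For condition~2 I would show that any component containing a cycle is exactly a cycle. Because all out-degrees are at most $1$, an undirected cycle must in fact be a directed cycle, so each of its vertices has its unique out-edge on the cycle and is therefore a $2$-vertex; such a vertex has only one edge left beyond its $2$-vertex role, and that edge is precisely its in-edge along the cycle, so it sends no edge of $G\setminus S$ off the cycle. Hence the component is the cycle itself, and every remaining (acyclic) component is a tree. The delicate point, which I expect to be the main obstacle, is exactly this step of excluding a cycle with trees hanging off it; the out-degree bound together with the ``two edges are consumed by the $2$-vertex role'' observation is what resolves it. For condition~3, a vertex of degree $3$ in $G\setminus S$ must be a sink (out-degree $0$, in-degree $3$); if a tree component held two such sinks, then along the unique path between them the edge orientations would have to switch from pointing into the first sink to pointing into the second, forcing an interior vertex of out-degree $2$, a contradiction.

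Finally, for part (iii), I would count: $G\setminus S$ has $\frac{5n}{8}$ vertices and $\frac{3n}{8}$ edges and, by part (ii), is a disjoint union of trees and cycles, so the number of tree components equals $|V(G\setminus S)|-|E(G\setminus S)|=\frac{n}{4}$. Choosing one vertex from each tree component produces a set $T\subseteq V(G\setminus S)$ with $|T|=\frac{n}{4}$; since these vertices lie in distinct components of $G\setminus S$, and two vertices of $V(G)\setminus S$ can be adjacent in $G$ only through an edge of $G\setminus S$, the set $T$ is independent, as required.
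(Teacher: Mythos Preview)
Your proof is correct, and it takes a genuinely different route from the paper's. The paper derives conditions~2 and~3 of part~(ii) by contradiction through Lemma~\ref{lem1}: it takes a cycle with an extra pendant edge (respectively, a path between two $3$-vertices) inside a component of $G\setminus S$, views its edge set as a subset of $L$ in the auxiliary bipartite graph $H=(S,L)$, and shows this subset violates Hall's condition, contradicting the existence of the perfect matching guaranteed by Lemma~\ref{lem1}. Your argument avoids Hall's theorem entirely: the orientation (from the $2$-vertex of each copy toward its attached leaf) together with the ``out-degree at most one'' bound is a clean, self-contained device that forces every cycle in $G\setminus S$ to be a directed cycle with no off-cycle edges and forces every degree-$3$ vertex of $G\setminus S$ to be a sink, from which both structural claims fall out.

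For part~(iii) the two proofs also diverge in presentation. The paper takes $T$ to be the set of vertices that appear \emph{only} as pendant vertices in the decomposition and asserts that $|T|=\tfrac{n}{4}$. In your language these are exactly the out-degree-$0$ vertices, and your tree-component count $|V(G\setminus S)|-|E(G\setminus S)|=\tfrac{n}{4}$ explains why: each tree component contains precisely one such sink, and cycle components contain none. So your argument not only recovers the paper's $T$ as a special case but shows that \emph{any} transversal of the tree components works, which is a slight strengthening. The trade-off is that the paper's Hall-based approach ties the lemma directly to the matching characterisation in Lemma~\ref{lem1}, which is reused later, whereas your approach is more elementary and portable.
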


\begin{proof}
Let $G$ be $(S_{1,2},S)$-decomposable. Then from the preceding lemma, (i) and the first part of (ii) are clear.
\\
Suppose that $F$ is a given component of $G\setminus S$. If $F$ is neither a tree nor a cycle, then it has a cycle like $C:v_1,e_1,v_2,e_2, \ldots ,v_t,e_t,v_1$ and an edge $e=v_iw$, where $1 \leq i \leq t$ and $w\in V(F)$. Consider the set $A=E(C)$. Since $G$ is cubic, each vertex in cycle $C$ has at most one neighbor in $S$ and $v_i$ has no neighbor in $S$. Hence $|N_H(A)| \leq |A|-1$, which contradicts Hall's condition and so by Lemma \ref{lem1} $G$ has no $(S_{1,2},S)$-decomposition, a contradiction.
\\
If there exist two 3-vertices $u$ and $v$ in some component $F$, then there exists a $(u,v)$-path $P:u=v_1,e_1,\ldots,e_t,v_t=v$ in $F$. Now, let $A=E(P)$. Similar to the proof of the previous part, one can show that $|N_H(A)| \leq |A|-1$, which contradicts Hall's condition and so by Lemma \ref{lem1} $G$ has no $(S_{1,2},S)$-decomposition, a contradiction.
\\ 
For (iii), let $T$ be the set of vertices which are only used as a pendant vertex in $S_{1,2}$-trees in the given decomposition. It is easy to see that  $T$ is an independent set and $|T|=\frac{n}{4}$. Now, the proof is complete.
\end{proof}

These necessary conditions are not sufficient. Some examples are given as follows.
\\
\begin{figure}[h]
\centering{\includegraphics[width=90mm]{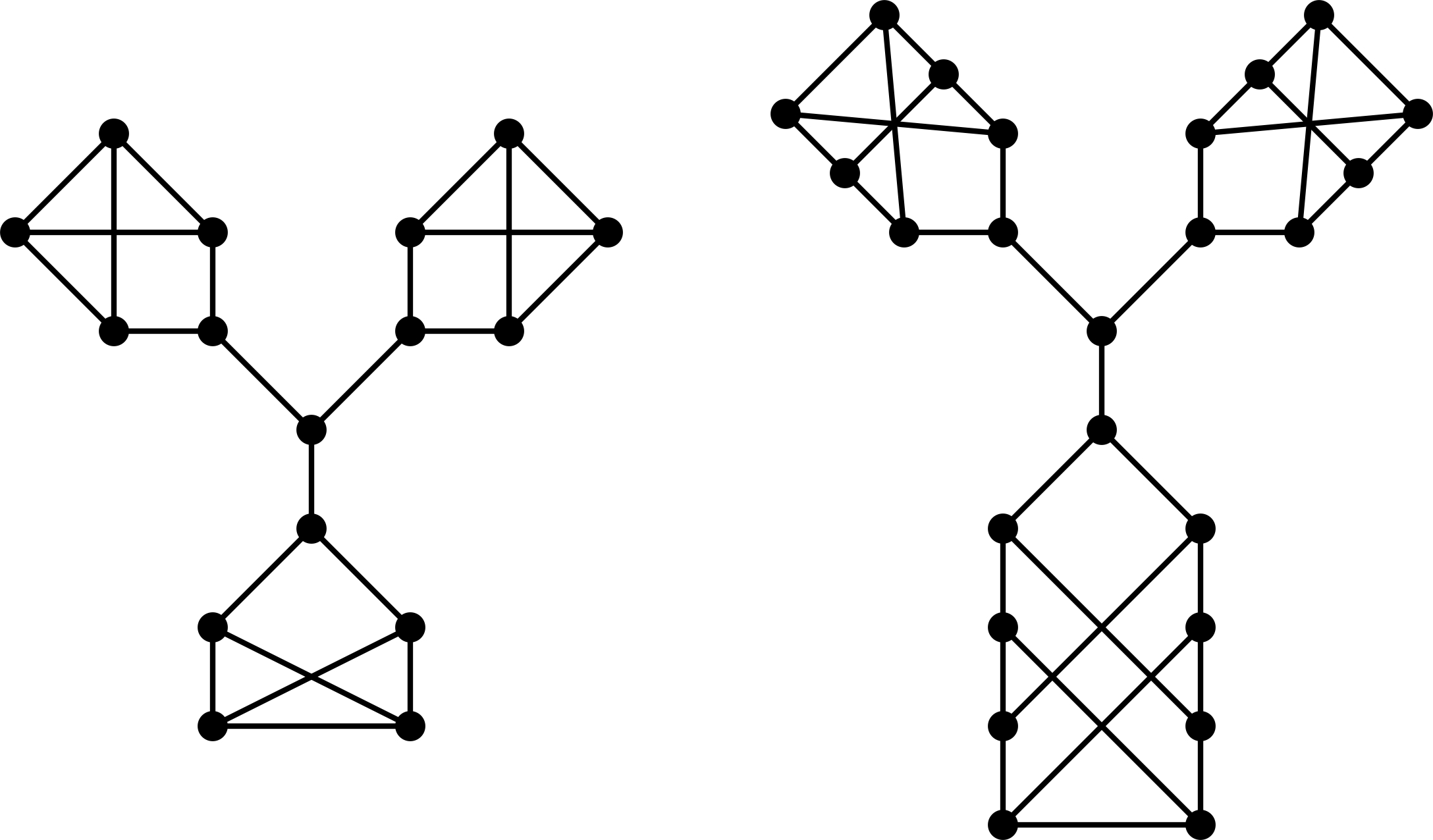}} \\
Figure 2.
\end{figure}
\\
Now, we provide some sufficient conditions for the existence of $S_{1,2}$-decomposition in cubic graphs and in the next section we will generalize them for the $S_{1,r-1}$-decomposition of $r$-regular graphs. 
\\
By Lemma \ref{lem2}, if $G$ is an $S_{1,2}$-decomposable cubic graph, then $\alpha(G) \geq \frac{3n}{8}$. We consider the case $\alpha(G)=\frac{3n}{8}$ and find two sufficient conditions for the existence of an $S_{1,2}$-decomposition in this case.

\begin{thm}\label{cycling}
Let $G$ be a cubic graph of order $n$ with $\alpha(G)=\frac{3n}{8}$. Suppose that there exists an independent cycling set $S \subseteq V(G)$ such that $|S|=\frac{3n}{8}$ and moreover, no vertex of $S$ is contained in a triangle. Then $G$ is $(S_{1,2}, S)$-decomposable.
\end{thm}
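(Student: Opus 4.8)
The plan is to apply Lemma~\ref{lem1}: the hypothesis already gives $|S|=\frac{3n}{8}$, so everything reduces to exhibiting a perfect matching in the auxiliary bipartite graph $H=(S,L)$. Because $S$ is an independent cycling set, $G\setminus S$ is a disjoint union of cycles, the vertices of $L$ are exactly the edges of these cycles, and every neighbour of a vertex of $S$ is a cycle vertex. Unravelling the adjacency in $H$, a $3$-vertex $s$ is joined to $u_e$ iff the cycle-edge $e$ is incident to one of the three neighbours of $s$; hence a matching in $H$ is the same as a choice, for each $s$, of a private extra edge $e_s$ lying next to a neighbour of $s$, which is precisely the edge that turns the claw at $s$ into a copy of $S_{1,2}$. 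A perfect matching additionally forces every cycle-edge to be used exactly once, i.e. it produces a genuine edge-decomposition.

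I would build the matching through Hall's theorem. The structure of a cycling set makes one inclusion cheap: since $G$ is cubic and $S$ is independent, each vertex of $G\setminus S$ sends exactly one edge into $S$, so the three neighbours of a given $s\in S$ are distinct and, for any $X\subseteq S$, the neighbourhoods stay disjoint, giving the strong expansion $|N(X)|=3|X|$. As $N_H(X)$ consists of all cycle-edges incident to $N(X)$, and a set of $p$ vertices spread over cycles is incident to at least $p$ edges, Hall's condition $|N_H(X)|\ge |X|$ for subsets of $S$ holds with great room to spare. The substantive requirement is the matching from the other side, namely that every cycle-edge can be simultaneously assigned a $3$-vertex: for $E'\subseteq L$ I would estimate $|N_H(E')|$, the set of $3$-vertices having a neighbour on some edge of $E'$, from below by $|E'|$, using that the two endpoints of each cycle-edge carry their own single $S$-neighbour.

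I expect the main obstacle to be exactly this $L$-side Hall condition, where short cycles and clustering can push the count to the boundary: a block of consecutive cycle-vertices whose $S$-neighbours overlap, or a tiny cycle whose edges see too few $3$-vertices, would threaten $|N_H(E')|\ge |E'|$. This is where the hypothesis that no vertex of $S$ lies in a triangle enters: it forbids two neighbours of the same $s$ from being adjacent along a cycle, so the $S$-neighbours attached to any arc of a cycle are genuinely spread out and each short stretch of cycle-edges sees enough distinct $3$-vertices to meet the bound. Once the strict inequality is secured on every arc---using triangle-freeness to rule out the degenerate clusters---Hall's condition holds on both sides, $H$ has a perfect matching, and Lemma~\ref{lem1} yields the desired $(S_{1,2},S)$-decomposition.
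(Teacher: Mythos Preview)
Your outline has a genuine gap: you never invoke the hypothesis $\alpha(G)=\frac{3n}{8}$, and triangle-freeness of the vertices of $S$ alone does not secure the $L$-side Hall condition. Triangle-freeness only prevents two \emph{adjacent} cycle-vertices from sharing their $S$-neighbour; it says nothing about non-adjacent ones. For instance, a $4$-cycle $v_1v_2v_3v_4$ in $G\setminus S$ with $N_S(v_1)=N_S(v_3)=\{s_1\}$ and $N_S(v_2)=N_S(v_4)=\{s_2\}$ creates no triangle through $S$, yet taking $E'$ to be its four edges gives $|N_H(E')|=2<4$. What rules this configuration out is the maximality of $S$ as an independent set: if two non-adjacent cycle-vertices $u,v$ share the $S$-neighbour $s$, then $(S\setminus\{s\})\cup\{u,v\}$ is independent of size $\frac{3n}{8}+1$, contradicting $\alpha(G)=\frac{3n}{8}$. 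This is the heart of the paper's argument, and it is absent from your plan.

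There is also a structural slip. An edge count ($|E(G\setminus S)|=\frac{3n}{2}-3\cdot\frac{3n}{8}=\frac{3n}{8}$ while $|V(G\setminus S)|=\frac{5n}{8}$) shows that $G\setminus S$ cannot be a disjoint union of cycles only; it consists of cycles on $\frac{3n}{8}$ vertices together with $\frac{n}{4}$ isolated vertices. Hence your assertions that every vertex of $G\setminus S$ sends exactly one edge into $S$ and that $|N(X)|=3|X|$ for all $X\subseteq S$ are both false. Once the missing maximality step is supplied, Hall becomes unnecessary: triangle-freeness (for adjacent pairs) together with $\alpha(G)=\frac{3n}{8}$ (for non-adjacent pairs) make the map sending each of the $\frac{3n}{8}$ cycle-vertices to its unique $S$-neighbour a bijection, and pairing each cycle-edge $e_j=v_jv_{j+1}$ with the $S$-neighbour of $v_j$ exhibits the perfect matching in $H$ directly. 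That is the paper's proof.
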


\begin{proof}
Suppose that $C_i$ ($1 \leq i \leq t$) are cycle components of $G\setminus S$. We claim that there are no two non-isolated vertices in $G\setminus S$ which have the same neighbor in $S$. Consider two non-isolated vertices $u$ and $v$ in $G\setminus S$. If $u$ and $v$ are adjacent, then since no vertex of $S$ is contained in a triangle, we are done. Now, suppose that $u$ and $v$ are not adjacent. If $N_S(u)=N_S(v)=\{s\}$, then $S'=(S\setminus \{s\}) \cup \{u,v\}$ is an independent set and $|S'|>\frac{3n}{8}$, a contradiction.
\\
Now, for each component $C:v_1,e_1,\ldots,v_t,e_t,v_1$ of $G\setminus S$ there exist distinct vertices $s_{i_1},\ldots,s_{i_t}$ in which $v_k$ is adjacent to $s_{i_k}$ in $S$. By adding $e_j$ to a claw containing $s_{i_j}$ as a central vertex we obtain an $S_{1,2}$-decomposition.  
\end{proof}

As a special case, we have the following result.

\begin{cor}
Let $G$ be a triangle-free cubic graph with $\alpha(G)=\frac{3n}{8}$ and there exists an independent cycling set $S \subseteq V(G)$ such that $|S|=\frac{3n}{8}$. Then $G$ is $(S_{1,2},S)$-decomposable.
\end{cor}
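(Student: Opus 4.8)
The plan is to observe that this Corollary is a direct specialization of Theorem \ref{cycling}, so the entire argument reduces to checking that the hypotheses stated here imply those of the theorem. Theorem \ref{cycling} requires three things: that $G$ be cubic of order $n$ with $\alpha(G)=\frac{3n}{8}$, that there exist an independent cycling set $S$ of size $\frac{3n}{8}$, and that no vertex of $S$ lie on a triangle. The first two of these are assumed verbatim in the Corollary, so the only work is to verify the third condition.

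First I would note that since $G$ is triangle-free, $G$ contains no triangle whatsoever; in particular no vertex of $G$, and hence no vertex of the set $S$, can be contained in a triangle. Thus the third hypothesis of Theorem \ref{cycling} holds trivially. Having matched all three hypotheses, I would then invoke Theorem \ref{cycling} directly to conclude that $G$ is $(S_{1,2},S)$-decomposable.

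Since the reduction is immediate, there is no real obstacle along this route; the substantive content lives entirely in Theorem \ref{cycling}. For completeness I would remark on where the triangle-free condition does its work inside that proof: it is precisely the step handling two adjacent non-isolated vertices $u$ and $v$ of $G\setminus S$ that share a common neighbor $s\in S$. If such a configuration occurred, then $u$, $v$, $s$ would form a triangle, and triangle-freeness rules this out; this is what validates the claim that no two non-isolated vertices of $G\setminus S$ share a neighbor in $S$, which in turn is exactly what permits assigning a distinct vertex $s_{i_k}\in S$ to each cycle vertex $v_k$ and building the $S_{1,2}$-trees. Because this is the only place the hypothesis is used, strengthening ``no vertex of $S$ lies on a triangle'' to ``$G$ is triangle-free'' leaves both the argument and its conclusion intact.
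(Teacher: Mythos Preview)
Your proposal is correct and matches the paper's approach exactly: the paper simply states this corollary as ``a special case'' of Theorem~\ref{cycling} without further proof, and your reduction---observing that triangle-freeness of $G$ trivially implies no vertex of $S$ lies on a triangle---is precisely the intended justification. Your additional commentary on where the triangle hypothesis is used inside the proof of Theorem~\ref{cycling} is accurate and a nice supplement, but not required.
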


Another interesting result in the case of $\alpha(G)=\frac{3n}{8}$ is as follows.

\begin{thm}\label{main}
Let $G$ be a cubic graph of order $n$ with $\alpha(G)=\frac{3n}{8}$ and moreover there exists an independent set $S\subseteq V(G)$ such that $|S|=\frac{3n}{8}$ and no vertex of $S$ is contained in a triangle, $C_5$ or $C_7$. Then $G$ is $(S_{1,2},S)$-decomposable.
\end{thm}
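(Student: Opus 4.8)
The plan is to show that $G$ is $(S_{1,2}, S)$-decomposable by verifying, via Lemma \ref{lem1}, that the associated bipartite graph $H = (S, L)$ has a perfect matching saturating $S$. Since $|S| = \frac{3n}{8}$ is exactly the size required by Lemma \ref{lem1}, the only thing to establish is Hall's condition on the $S$-side: for every $A \subseteq S$ we have $|N_H(A)| \geq |A|$. Equivalently, recalling the definition of $H$, each $s \in S$ must be ``assignable'' to an edge $e_s \in E(G \setminus S)$ that can be attached to a claw centered at $s$, with all these assignments made injectively. So the first step is to translate the combinatorial task into producing, for each $s \in S$, a suitable edge of $G \setminus S$, distinct across different $s$.

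The natural strategy is to reduce to Theorem \ref{cycling}. That theorem already handles the case where $S$ is an independent \emph{cycling} set whose vertices avoid triangles; here we are given only that $S$ avoids $C_3$, $C_5$, and $C_7$, but $G \setminus S$ may contain tree components or longer odd structures, so $S$ need not be a cycling set. The key idea I would pursue is a local argument on the components of $G \setminus S$. First I would establish, exactly as in the proof of Theorem \ref{cycling}, that no two non-isolated vertices of $G \setminus S$ share a common neighbor in $S$: if they did and they were non-adjacent, swapping that shared neighbor out of $S$ for the two vertices would produce an independent set larger than $\frac{3n}{8}$, contradicting $\alpha(G) = \frac{3n}{8}$; and if they were adjacent, they would force a short cycle through a vertex of $S$, contradicting the hypothesis that no $s \in S$ lies in a $C_3$, $C_5$, or $C_7$. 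This ``distinct neighbors'' property is what makes the matching assignment injective.

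The heart of the argument is then a careful analysis of each component $F$ of $G \setminus S$. By Lemma \ref{lem2}(ii), each such $F$ is a cycle or a tree with at most one $3$-vertex, and since $G$ is cubic, each vertex of $F$ has at most one neighbor in $S$. For cycle components the assignment is immediate, as in Theorem \ref{cycling}. For tree components the plan is to root $F$ and, processing its edges, match each edge $e \in E(F)$ to a distinct $3$-vertex $s \in S$ adjacent to one endpoint of $e$, using the injectivity guaranteed above together with a counting balance between the edges of $F$ and the $S$-vertices attached to $F$. I expect the main obstacle to be precisely this edge-to-vertex bookkeeping on tree components: one must ensure there are enough distinct $S$-neighbors to cover every edge of $F$, and it is here that ruling out $C_5$ and $C_7$ (not merely $C_3$) should be essential, since short odd cycles through $S$ would otherwise create deficiencies that violate Hall's condition. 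Verifying Hall's inequality uniformly over all $A \subseteq S$ — rather than just for edge sets of single components — will require aggregating these local matchings into a global one and checking that no subset of $S$ is starved, which is the delicate step to get right.
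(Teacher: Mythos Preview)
Your proposal has a genuine gap at its central combinatorial claim. You assert that no two non-isolated vertices of $G\setminus S$ share a neighbor in $S$, arguing that if non-adjacent $u,v$ share $s\in S$ then $(S\setminus\{s\})\cup\{u,v\}$ is independent. This swap only works when \emph{both} $u$ and $v$ have exactly one neighbor in $S$, i.e.\ when both have degree~$2$ in $G\setminus S$. But a path component of $G\setminus S$ has endpoints of degree~$1$, and such an endpoint has \emph{two} neighbors in $S$; removing only one of them leaves the other, so the swapped set is not independent. Thus your ``distinct neighbors'' lemma is false in general, and with it the injectivity of your edge--to--$S$ assignment collapses precisely on the tree (path) components where you said the argument is delicate.

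A second, smaller issue: you invoke Lemma~\ref{lem2}(ii) to deduce that every component of $G\setminus S$ is a cycle or a tree with at most one $3$-vertex, but that lemma is a \emph{consequence} of $(S_{1,2},S)$-decomposability, which is what you are trying to prove. The correct argument (which the paper gives directly) is that $\alpha(G)=\tfrac{3n}{8}$ forces every vertex of $G\setminus S$ to have degree at most~$2$ there, so the components are paths or cycles.

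The paper's proof keeps your Claim for degree-$2$ vertices (its Claim~2), but treats path endpoints separately. For an endpoint $v$ with $N_S(v)=\{x,y\}$ it shows (Claim~3) that at least one of $x,y$ has no degree-$2$ neighbor in $G\setminus S$; this is where the $C_5$-freeness is first used. Then, to guarantee enough such ``endpoint-only'' $S$-vertices to cover all path components, it proves (Claim~4) that two endpoints of this restricted type cannot share their special $S$-neighbor, and it is here that both the $C_5$- and $C_7$-freeness hypotheses are decisive. Your outline does not contain any mechanism playing the role of Claims~3 and~4, and without them Hall's condition cannot be verified.
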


\begin{proof}
We divide the proof into four claims.
\vspace{0.8em}
\\
\textbf{Claim 1.} Each  component of $G\setminus S$ is a path or a cycle.
\\
If there exists a vertex of degree 3 in $G\setminus S$, then by adding this vertex to $S$ we obtain an independent set $S'$ such that $|S'|=\frac{3n}{8}+1$,  a contradiction. 
\vspace{0.8em}
\\
\textbf{Claim 2.} Let $u,v\in V(G\setminus S)$ be two vertices of degree two in $G\setminus S$. Then $N_S(v)\neq N_S(u)$.
\\
Let $u$ and $v$ be two vertices in $G\setminus S$ such that $d_{G \setminus S}(u)=d_{G \setminus S}(v)=2$ and $N_S(u)=N_S(v)=\{s\}$. If $u$ and $v$ are adjacent, then $s$ is contained in a triangle,  a contradiction. Also, if $u$ and $v$ are not adjacent, then $S'=(S \setminus\{s\})\cup \{u,v\}$ is an independent set and  $|S'|=\frac{3n}{8}+1$, a contradiction.  So, the claim is proved.
\vspace{0.8em}
\\
Now, we check the Hall's condition for the edges of $G\setminus S$. Suppose that  $L=\{e_1,\ldots,e_l\}\subseteq E(G\setminus S)$. Let $P_1,\ldots,P_k$ be all path components of $G\setminus S$. Now, we consider two cases:
\vspace{0.8em}
\\
\textbf{Case 1.} No $P_i$ is contained in $\langle L \rangle$. 
\\
Note that for each edge  $e\in L$, one of its endpoints has degree 2 in $G\setminus S$. Because if both endpoints are of degree 1 in $G\setminus S$, then the induced subgraph on this edge is a path component of $G\setminus S$.  Now, we show that for each edge $e_i \in L$, one can find $v_{e_i} \in V(G \setminus S)$ such that $d_{G \setminus S}(v_i)=2$, $v_i$ is an endpoint of $e_i$ and if $i \neq j$, then $v_{e_i} \neq v_{e_j}$. 
\\
For $e_1$ define $v_{e_1}$ one of the its endpoints whose degree is 2. If $v_{e_1}$ is not one of the endpoints of $e_i$, $2 \leq i \leq l$, then define $v_{e_2}$ as one of its endpoints which has degree 2 in $G\setminus S$.  Otherwise, suppose that $v_{e_1}$  is one of the endpoints of $e_j=\{v_{e_1}, u\}$. If $d_{G\setminus S}(u)=1$, then $e_i$ and $e_j$ induce a $P_3$-component in $G\setminus S$, a contradiction. Hence,  $d_{G\setminus S}(u)=2$ and define $v_{e_j}=u$. By repeating this procedure for each edge $e\in L$ one can find $\{v_{e_1}, \ldots, v_{e_l}\}$. Now, Claim 2 implies that for each $e \in L$ there exists a distinct vertex in $S$ which is adjacent to $v_e$ and so in this case Hall's  condition holds. 
\vspace{0.8em}
\\
\textbf{Case 2.} There exist $i_1,\ldots,i_t$ such that $1\leq i_j\leq k$ and $P_{i_1},\ldots,P_{i_t}$ are all path components of $\langle L \rangle$. We have the following.
\vspace{0.8em}
\\
\textbf{Claim 3.} Let $v\in V(G\setminus S)$ such that $d_{G\setminus S}(v)=1$ and $N_{S}(v)=\{x,y\}$. Then both  $x$ and $y$ are not adjacent to the vertices of degree two in $G\setminus S$.
\\
Let $x$ and $y$ be adjacent to $v_x$ and $v_y$ in $G\setminus S$, respectively, and  $d_{G\setminus S}(v_x)=d_{G\setminus S}(v_y)=2$. Note that $v$ is not adjacent to $v_x$ and $v_y$, since otherwise there exists a triangle containing $v$, a contradiction. Now, if $v_x$ and $v_y$ are adjacent, then $C:v,x,v_x,v_y,y,v$ is a cycle of length 5, a contradiction. If $v_x$ and $v_y$ are not adjacent, then define $S'=(S\setminus \{x,y\})\cup \{v,v_x,v_y\}$. It can be easily seen that $S'$ is an independent set and  $|S'|=\frac{3n}{8}+1$,  a contradiction.\vspace{0.8em}
\\
Now, we can prove that in the second case, $L$ satisfies the Hall condition. It suffices to show that the edges of  $P_{i_1},\ldots,P_{i_t}$ satisfy Hall's condition. Because, similar to the proof of  the first case, one can see that other edges have distinct neighbors in $S$ and we are done. Now, Claim 2 implies that we can find $\sum_{j=1}^t(|E(P_{i_j})|-1)$ vertices in $S$ which are adjacent to the vertices of degree 2 in the path components. Let $T\subseteq S$ be the set of vertices in $S$ which are adjacent to the end vertices of $P_{i_1}\ldots,P_{i_t}$ and they are  adjacent to no vertex of degree 2 in $G\setminus S$. It suffices to show that $|T|\geq t$.
\\
By contrary, suppose that $|T|\leq t-1$. Then Claim 3 implies that each end vertex of paths has a neighbor in $T$. Let $A$ be the set of end vertices of paths that have one neighbor in $T$ and let $B$ be the set of end vertices which have two neighbors in $T$. We have the following.
\begin{center}
$|A|+|B|=2t \;\; , \;\;|A|+2|B|\leq 3t-3.$
\end{center}
Hence, we conclude that  $|A|\geq t+3$. Now, we prove the following claim.
\vspace{0.8em}
\\
\textbf{Claim 4.} If $u,v \in A$, then $N_T(u) \cap N_T(v) = \emptyset$.
\\
First, note that if $u$ and $v$ are adjacent, then we are done. So, we may assume that $u$ and $v$ are not adjacent. Let $N_T(u)=N_T(v)=\{w\}$. Suppose that $N_S(u)=\{w,x\}$ and $N_S(v)=\{w,y\}$. By the definition of $T$, we conclude that $x$ and $y$ are adjacent to some vertices of degree 2 in $G\setminus S$ say $v_x$ and $v_y$, respectively. Notice that if $x=y$ and $\{u, v, v_x\}$  is not independent set, then one can find a triangle contains a vertex of $S$, a contradiction. 
Thus, $\{u, v, v_x\}$ is an independent set. Now,  $S'=(S\setminus \{u,x\})\cup \{u,v,v_x\}$ is an independent set of size $\frac{3n}{8}+1$, a contradiction. Hence, $x\neq y$. We show that $\{u,v,v_x,v_y\}$ is an independent set. 
Since no vertex of $S$ is contained in a triangle, $u$ and $v_x$ are not adjacent (similarly, $v$ and $v_y$ are not adjacent). So, suppose that $v$ and $v_x$ are adjacent. Then $C:u,x,v_x,v,w,u$ is a cycle of length 5 which contains vertices of $S$, a contradiction. Also, note that $v_x$ and $v_y$ are not adjacent. Since, otherwise $C:u,x,v_x,v_y,y,v,w,u$ is a cycle of length 7, a contradiction. This implies that $\{u,v,v_x,v_y\}$ is an independent set. Now, let $S'=(S\setminus \{x,y,w\})\cup \{u,v,v_x,v_y\}$. Then $S'$ is an independent set and $|S'|>\frac{3n}{8}$, contradiction and this completes the proof of the claim.
\vspace{0.8em}
\\
Now, Claim 4 implies that for every $v\in A$ we have a distinct neighbor $t_v\in T$ and this implies that $|T|\geq t+3$, a contradiction. This completes the proof.
\end{proof}

Now, we have an immedaite corollary.

\begin{cor}
Let $G$ be a $\{C_3, C_5, C_7\}$-free cubic graph of order $n$ with $\alpha(G)=\frac{3n}{8}$. Then $G$ has an $S_{1,2}$-decomposition.
\end{cor}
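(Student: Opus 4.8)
The plan is to read the corollary off Theorem \ref{main} by checking that a $\{C_3,C_5,C_7\}$-free cubic graph with $\alpha(G)=\frac{3n}{8}$ satisfies every hypothesis of that theorem. First I would produce the required independent set: since by assumption $\alpha(G)=\frac{3n}{8}$, a largest independent set $S$ of $G$ has exactly $|S|=\frac{3n}{8}$ vertices, which is precisely the cardinality demanded in Theorem \ref{main}.

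It then remains to verify the structural condition that no vertex of $S$ lies on a triangle, a $C_5$, or a $C_7$. This is immediate from the hypothesis: a $\{C_3,C_5,C_7\}$-free graph contains no subgraph isomorphic to $C_3$, $C_5$, or $C_7$ whatsoever, so no vertex of $G$ at all can belong to such a cycle, and in particular no vertex of $S$ does. Applying Theorem \ref{main} to this $S$ now gives that $G$ is $(S_{1,2},S)$-decomposable, and therefore $G$ has an $S_{1,2}$-decomposition, as claimed.

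Because the hypotheses of the corollary are globally stronger than the vertex-local cycle conditions of Theorem \ref{main}, there is no genuine obstacle here; the entire difficulty was already dispatched in the proof of Theorem \ref{main}. The only point worth flagging is the quantifier shift---``the graph has no short odd cycle'' versus ``the chosen set $S$ meets no short odd cycle''---which is exactly what makes the specialization routine: one does not need to select an $S$ adapted to the cycle structure, since the global cycle-freeness forces the local condition for \emph{every} maximum independent set automatically.
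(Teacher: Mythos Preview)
Your argument is correct and is exactly the intended one: the paper presents this as an immediate corollary of Theorem~\ref{main}, and your verification that a maximum independent set $S$ of size $\frac{3n}{8}$ trivially satisfies the local cycle conditions when $G$ is globally $\{C_3,C_5,C_7\}$-free is precisely the specialization in play.
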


Now, we obtain another sufficient condition for the existence of an $S_{1,2}$-decomposition in a cubic graph. If $G$ is a cubic graph, then $\gamma(G) \geq \frac{n}{4}$. In the following theorem, we provide a necessary and  sufficient condition on the existence of a $S_{1,2}$-decomposition for cubic bipartite graph $G$ under which $\gamma(G) = \frac{n}{4}$.

\begin{thm}
Let $G=(A,B)$ be a cubic bipartite graph of order $n$ such that $8|n$. Then $\gamma(G)=\frac{n}{4}$ if and only if there exists $S \subseteq A$ of size $\frac{3n}{8}$ such that $G$ is both $(S_{1,2}, S)$-decomposable and $(S_{1,2}, N(A\setminus S))$-decomposable.
\end{thm}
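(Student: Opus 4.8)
The plan is to recognize the condition $\gamma(G)=\frac{n}{4}$ as the existence of an \emph{efficient} dominating set and then translate this, via Lemma~\ref{lem1}, into the two prescribed decompositions. Since $G$ is cubic, every dominating set has size at least $\frac{n}{4}$; hence a dominating set $D$ of size $\frac{n}{4}$ is a perfect code, i.e. the closed neighbourhoods $N[v]$, $v\in D$, partition $V(G)$. In particular $D$ is independent and every vertex is dominated exactly once. Writing $D_A=D\cap A$ and $D_B=D\cap B$, each vertex of $B\setminus D_B$ has a unique neighbour in $D_A$ and, symmetrically, each vertex of $A\setminus D_A$ has a unique neighbour in $D_B$; a double count then forces $|D_A|=|D_B|=\frac{n}{8}$. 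This even split is the structural backbone of both implications.

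For the forward implication I would set $S:=A\setminus D_A$, so that $|S|=\frac{3n}{8}$ and $A\setminus S=D_A$. Because $D$ is a perfect code, $N(A\setminus S)=N(D_A)$ consists of $3|D_A|=\frac{3n}{8}$ distinct vertices, all lying in $B\setminus D_B$, which is the size required by Lemma~\ref{lem1}. I would then check Hall's condition for $H=(S,L)$. The key observation is that $G\setminus S$ is a disjoint union of claws $K_{1,3}$ centred at the vertices of $D_A$ (each vertex of $N(D_A)$ being dominated exactly once), so $L$ is the set of claw edges. In $H$ a vertex $s\in S$ is adjacent only to those claw edges whose $B$-endpoint is a neighbour of $s$; counting shows each $s\in S$ is adjacent in $H$ to exactly two claw edges and each claw edge to exactly two vertices of $S$. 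Thus $H$ is a $2$-regular bipartite graph on equal parts and therefore has a perfect matching, giving the $(S_{1,2},S)$-decomposition. The same argument with the roles of $A$ and $B$ (and of $D_A$ and $D_B$) interchanged, now with $S'=N(A\setminus S)=B\setminus D_B$, yields the $(S_{1,2},N(A\setminus S))$-decomposition.

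For the converse I would put $T:=B\setminus N(A\setminus S)$. Applying Lemma~\ref{lem1} to the $(S_{1,2},N(A\setminus S))$-decomposition gives $|N(A\setminus S)|=\frac{3n}{8}$, so $|A\setminus S|=|T|=\frac{n}{8}$ and the candidate dominating set is $D:=(A\setminus S)\cup T$, of size $\frac{n}{4}$. By construction $A\setminus S,T\subseteq D$; every vertex of $N(A\setminus S)$ has a neighbour in $A\setminus S$; and by the very definition of $T$ we have $N(T)\subseteq S$. Hence $D$ dominates everything except possibly the vertices of $S$, and the whole problem reduces to proving $N(T)=S$, i.e. that every $3$-vertex $s\in S$ has a neighbour in $T$. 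This is the main obstacle, and it is precisely where the second decomposition is indispensable: the $(S_{1,2},S)$-decomposition alone does not forbid a $3$-centre from having both of its leaf-neighbours inside $T$, which would leave some other vertex of $S$ undominated.

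To clear this obstacle I would analyse the $(S_{1,2},N(A\setminus S))$-decomposition structurally. Since its set of $3$-vertices is $N(A\setminus S)\subseteq B$, each vertex of $T=B\setminus N(A\setminus S)$ can only play the role of a far (pendant) leaf, so all three edges at every $t\in T$ are far-leaf edges running to $2$-centres lying in $A$. A $2$-centre, however, carries exactly one far-leaf edge, hence has exactly one neighbour in $T$; consequently the neighbourhoods of distinct vertices of $T$ are pairwise disjoint and $|N(T)|=3|T|=\frac{3n}{8}=|S|$. Combined with $N(T)\subseteq S$ this forces $N(T)=S$, so every $s\in S$ is dominated by $T$ and $D$ is a dominating set of size $\frac{n}{4}$. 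As $\gamma(G)\ge\frac{n}{4}$ holds for every cubic graph, we conclude $\gamma(G)=\frac{n}{4}$, completing the plan.
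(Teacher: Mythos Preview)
Your proof is correct and follows essentially the same line as the paper's: in the forward direction both arguments show that $H=(S,L)$ is $2$-regular bipartite (you make the claw structure of $G\setminus S$ explicit, modulo the isolated vertices $D_B$), and in the converse both take $D=(A\setminus S)\cup T$ with $T=B\setminus N(A\setminus S)$ and reduce everything to showing that the vertices of $T$ have pairwise disjoint neighbourhoods, whence $N(T)=S$. The only real difference is how that last fact is obtained: you argue directly from the roles in the $(S_{1,2},N(A\setminus S))$-decomposition (each $t\in T$ is always a far leaf, each $2$-centre carries a single far-leaf edge), whereas the paper simply invokes Lemma~\ref{lem2}(ii), noting that every $t\in T$ is a $3$-vertex of $G\setminus N(A\setminus S)$ and hence no two of them can lie in the same component.
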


\begin{proof}
Let $D$ be a dominating set of $G$ of size $\frac{n}{4}$. Then vertices of $D$ has no common neighbors in $V(G)\setminus D$. Now, let $D_1=D \cap A$ and $D_2=D \cap B$ and $|D_1|=a,|D_2|=b$. Since $D$ is a dominating set of size $\frac{n}{4}$ we have:

\begin{center}
$a+b=\frac{n}{4}$ , $3a+b=\frac{n}{2}$.
\end{center}
Then $a=b=\frac{n}{8}$. Now, let $S=N(D_1)$. We show that $G$ has an $(S_{1,2}, S)$-decomposition. Clearly, $|S|= \frac{3n}{8}$ and $E(G\setminus S)$ is exactly the edges between $D_2$ and $N(D_2)$. Note that if $v \in N(D_2)$, then $d_{S}(v)=2$. Now, it is not hard to see that the graph $H=(S,L)$, defined in Lemma \ref{lem1}, is a 2-regular bipartite graph and hence it has a perfect matching. So, by Lemma \ref{lem1}, $G$ is $(S_{1,2}, S)$-decomposable.
Notice that if we consider $T=N(D_2)$, then similarly $G$ is $(S_{1,2}, T)$-decomposable. Since $T=N(A\setminus S)$, this completes the proof of the one side of the theorem.
\\
Conversely, suppose that there exists $S\subseteq A$ such that satisfies the conditions. Note that each vertex in $A\setminus S$ is a 3-vertex in $G\setminus S$. Now, Lemma \ref{lem2} implies that each of them is in a different component of $G\setminus S$ and so they have no common neighbors. By a similar method, one can show that the vertices of $B\setminus N(A\setminus S)$ have no common neighbors. Now, $D=(A\setminus S) \cup (B\setminus N(A\setminus S))$ is a dominating set of size $\frac{n}{4}$ and this completes the proof.  
\end{proof}

Now, we state the following corollary.

\begin{cor}
Let $G=(A,B)$ be a bipartite cubic graph of order $n$. If $\gamma(G)=\frac{n}{4}$, then $G$ is $S_{1,2}$-decomposable.
\end{cor}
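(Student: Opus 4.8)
The plan is to obtain this corollary as a direct consequence of the preceding theorem, whose forward direction (from $\gamma(G)=\frac{n}{4}$ to the existence of a decomposition) already produces an $(S_{1,2},S)$-decomposition. The only gap to bridge is that the theorem carries the divisibility assumption $8\mid n$, which is not stated in the corollary; so the first thing I would do is argue that $8\mid n$ is automatic once $G$ is a bipartite cubic graph with $\gamma(G)=\frac{n}{4}$.

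To see this, let $D$ be a minimum dominating set, so $|D|=\frac{n}{4}$. Since $G$ is cubic, every vertex dominates at most four vertices (itself together with its three neighbors), and hence a dominating set of size $\frac{n}{4}$ must be \emph{efficient}: the closed neighborhoods $N[d]$ for $d\in D$ partition $V(G)$, and in particular no two vertices of $D$ share a neighbor outside $D$. Writing $D_1=D\cap A$, $D_2=D\cap B$ and $a=|D_1|$, $b=|D_2|$, and using $|A|=|B|=\frac{n}{2}$ (valid since $G$ is bipartite and cubic), the partition of $B$ into $D_2$ together with the neighborhoods of the vertices of $D_1\subseteq A$ gives $3a+b=\frac{n}{2}$, while $a+b=\frac{n}{4}$. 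Solving these two identities forces $a=b=\frac{n}{8}$, and since $a$ is an integer we conclude $8\mid n$.

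With $8\mid n$ established, the previous theorem applies verbatim: from $\gamma(G)=\frac{n}{4}$ it yields a set $S\subseteq A$ with $|S|=\frac{3n}{8}$ for which $G$ is $(S_{1,2},S)$-decomposable. Being $(S_{1,2},S)$-decomposable is by definition a particular $S_{1,2}$-decomposition, so $G$ is $S_{1,2}$-decomposable, as required. I expect the only genuine (and rather minor) obstacle to be the verification that $8\mid n$; everything else is a restatement of the theorem. The one point worth double-checking is that the counting identity $3a+b=\frac{n}{2}$ is set up on the correct part---it is the part $B$ whose cardinality is expressed through $D_2$ and the neighbors of $D_1\subseteq A$---so that the conclusion $a=b=\frac{n}{8}$ is genuinely forced rather than merely one admissible solution.
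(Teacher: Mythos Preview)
Your proposal is correct and matches the paper's approach: the corollary is stated there without proof, as an immediate consequence of the preceding theorem. Your additional observation that the hypothesis $8\mid n$ of that theorem is automatically satisfied---via the efficiency of a size-$\tfrac{n}{4}$ dominating set and the counting $a+b=\tfrac{n}{4}$, $3a+b=\tfrac{n}{2}$---is exactly the argument already carried out inside the theorem's own proof, so nothing new is needed.
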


As an example of this result we can obtain that $Q_3$ is $S_{1,2}$-decomposable, because it is bipartite and $\gamma(Q_3)= 2 = \frac{n}{4}$.

Now, we provide another sufficient condition for the existence of an $S_{1,2}$-decomposition in bipartite cubic graphs.

\begin{thm}
Let $G=(A,B)$ be a bipartite cubic graph of order $n$ and $S\subseteq A$ be of size $\frac{3n}{8}$. Then $G$ is $(S_{1,2},S)$-decomposable if and only if there exists a perfect matching between $S$ and $N(A\setminus S)$.
\end{thm}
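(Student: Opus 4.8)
The plan is to reduce everything to the auxiliary bipartite graph $H=(S,L)$ of Lemma \ref{lem1} and to exploit the bipartite structure to translate a perfect matching in $H$ into a matching of $G$ between $S$ and $T:=N(A\setminus S)$, and back. First I would fix notation: since $G$ is cubic and bipartite we have $|A|=|B|=\frac{n}{2}$, so $|A\setminus S|=\frac{n}{2}-\frac{3n}{8}=\frac{n}{8}$. Because $S\subseteq A$ and every edge of $G$ joins $A$ to $B$, the edge set $E(G\setminus S)$ is precisely the set of edges incident to $A\setminus S$, each running to $B$; hence $|E(G\setminus S)|=3|A\setminus S|=\frac{3n}{8}=|L|=|S|$, and every vertex $u_e\in L$ corresponds to an edge $e=ab$ with $a\in A\setminus S$ and $b\in B$. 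For such $u_e$ the neighbourhood in $H$ is exactly $N_H(u_e)=N_S(b):=N(b)\cap S$, since attaching $e$ to a claw at $s$ requires $sb\in E(G)$. By Lemma \ref{lem1} the whole statement becomes: $H$ has a perfect matching if and only if $G$ has a perfect matching between $S$ and $T$.

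The key structural observation, and the step I expect to be the main obstacle, is the following. If $b\in B$ is the $B$-endpoint of two distinct edges of $E(G\setminus S)$, i.e. $\deg_{A\setminus S}(b)\ge 2$, then the corresponding vertices of $L$ are \emph{twins}: they share the neighbourhood $N_S(b)$, whose size is $3-\deg_{A\setminus S}(b)\le 1$. Applying Hall's condition to this set of twins shows that $H$ then has no perfect matching. Consequently, whenever $H$ has a perfect matching, every $b\in T$ satisfies $\deg_{A\setminus S}(b)=1$, the edges from $A\setminus S$ land in pairwise distinct vertices of $B$, and therefore $|T|=3|A\setminus S|=\frac{3n}{8}=|S|$. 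This automatically makes ``perfect matching between $S$ and $T$'' well-posed (both sides have size $\frac{3n}{8}$) and sets up a bijection $b\leftrightarrow u_{e_b}$ between $T$ and $L$.

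For the forward direction I would take a perfect matching $M=\{(s,u_{e_s}):s\in S\}$ of $H$, write each $e_s=a_sb_s$ with $a_s\in A\setminus S$ and $b_s\in B$, and consider the map $s\mapsto b_s$. By the $H$-adjacency we have $sb_s\in E(G)$ and $b_s\in T$, so $\{sb_s:s\in S\}$ is a set of edges of $G$ between $S$ and $T$. These edges are independent: distinct $s$ give distinct $S$-endpoints, and if $b_s=b_{s'}=b$ for $s\neq s'$ then $e_s,e_{s'}$ are distinct edges of $E(G\setminus S)$ with the same $B$-endpoint, forcing $\deg_{A\setminus S}(b)\ge 2$, which contradicts the twin argument. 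Since the matching has size $|S|=|T|$, it is a perfect matching between $S$ and $T$.

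For the converse I would start from a perfect matching between $S$ and $T$; equality $|S|=|T|$ forces $|N(A\setminus S)|=3|A\setminus S|$, so again $\deg_{A\setminus S}(b)=1$ for every $b\in T$ and each $b\in T$ is the $B$-endpoint of a unique edge $e_b\in E(G\setminus S)$, giving the bijection $T\leftrightarrow L$. Writing the given matching as $\{s\,\sigma(s):s\in S\}$ for a bijection $\sigma\colon S\to T$ with $s\,\sigma(s)\in E(G)$, I would match $s$ to $u_{e_{\sigma(s)}}$ in $H$; this is legitimate because $s\,\sigma(s)\in E(G)$ is exactly the $H$-adjacency of $s$ with $u_{e_{\sigma(s)}}$, and it is a perfect matching of $H$ since $\sigma$ and $b\mapsto u_{e_b}$ are bijections. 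Lemma \ref{lem1} then yields that $G$ is $(S_{1,2},S)$-decomposable, completing the proof. The only delicate point throughout is the twin/Hall observation of the second paragraph; once it is in place both directions are essentially bookkeeping with the bijection $T\leftrightarrow L$.
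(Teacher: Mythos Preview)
Your proof is correct and follows essentially the same route as the paper: both reduce the statement to Lemma~\ref{lem1}, establish that every $b\in N(A\setminus S)$ has exactly one neighbour in $A\setminus S$ (so that $|T|=\tfrac{3n}{8}$ and $T\leftrightarrow L$ is a bijection), and then translate a perfect matching of $H$ into one between $S$ and $T$, and conversely. The only minor difference is that the paper obtains the ``no shared $B$-neighbour'' fact by invoking Lemma~\ref{lem2} (no component of $G\setminus S$ has two $3$-vertices), whereas you derive it directly via your twin argument in $H$; your version is a little more self-contained, but the content is the same.
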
 

\begin{proof}
Necessity. First suppose that $G$ is $(S_{1,2},S)$-decomposable. Then the second part of Lemma \ref{lem2} indicates that no component of $G\setminus S$ has two 3-vertices. This implies that no two vertices of $A \setminus S$ have a common neighbor in $B$. So, $|N(A\setminus S)|=\frac{3n}{8}$. Now, note that if Hall's condition does not hold for $S$ and $N(A\setminus S)$, then Hall's condition does not hold in $H=(S, L)$, too. This is a contradiction and this completes the proof of the one side of theorem.
\\
Sufficiency. Suppose that there exists a perfect matching between $S$ and $N(A\setminus S)$. Then $|N(A\setminus S)|=\frac{3n}{8}$ which implies that no two vertices of $A \setminus S$ have a common neighbor in $B$. For each vertex $v\in N(A\setminus S)$, there exists a unique edge $e_v \in E(G\setminus S)$ in which $v$ is one of its end points. Let $M=\{(u_i,v_i)| \; i=1, 2, \ldots, \frac{3n}{8}\}$ be a matching between $S$ and $N(A\setminus S)$. Then by adding edge $e_{v_i}$ to a claw containing $u_i$ as a 3-vertex, one can obtain an $S_{1, 2}$-decomposition.
\end{proof}

\section{$S_{1, r-1}$-Decomposition of $r$-Regular Graphs}
In this section, we generalize the results of the previous section to $S_{1, r-1}$-decomposition of $r$-regular graphs.

Similar to the cubic graphs we find some necessary and some sufficient conditions for the existence of $S_{1,r-1}$-decomposition in $r$-regular graphs. The following holds.

\begin{preremark}
Let $G$ be an $r$-regular graph of order $n$ which is $S_{1, r-1}$-decomposable. Then $2(r+1)|rn$ and $\alpha(G) \geq \frac{rn}{2(r+1)}$.
\end{preremark}

The following theorem is a generalization of Theorem \ref{cycling}. But in this case we do not need the condition $\alpha(G)= \frac{rn}{2(r+1)}$.

\begin{thm}
Let $G$ be an $r$-regular graph ($r \geq 4$) of order $n$ and there exists an independent cycling set $S \subseteq V(G)$ such that $|S|= \frac{rn}{2(r+1)}$ and moreover no vertex of $S$ is contained in a triangle. Then $G$ is $(S_{1, r-1}, S)$-decomposable.
\end{thm}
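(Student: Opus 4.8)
The plan is to mimic the proof of Theorem \ref{cycling} essentially verbatim, since the hypotheses are the direct analogues: $S$ is an independent cycling set of the correct size $\frac{rn}{2(r+1)}$ with no vertex contained in a triangle. By Lemma \ref{lem1} (in its $r$-regular form, implicit in the Remark above), it suffices to build a perfect matching in the associated bipartite graph $H=(S,L)$, where $L$ indexes the edges of $G\setminus S$; equivalently, I want to assign to each edge $e$ of $G\setminus S$ a distinct vertex $s\in S$ so that adding $e$ to a claw centered at $s$ yields a copy of $S_{1,r-1}$. First I would record that every component of $G\setminus S$ is a cycle, since $S$ is a cycling set, so every vertex of $G\setminus S$ is a non-isolated vertex of degree $2$ there.

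The key structural step, exactly as in Theorem \ref{cycling}, is the claim that no two non-isolated vertices $u,v$ of $G\setminus S$ share a common neighbor in $S$. I would split into two cases. If $u$ and $v$ are adjacent, then a common neighbor $s\in S$ would put $s$ in a triangle $u,v,s$, contradicting the triangle-freeness hypothesis on vertices of $S$. If $u$ and $v$ are non-adjacent and $N_S(u)=N_S(v)=\{s\}$, then $S'=(S\setminus\{s\})\cup\{u,v\}$ is independent and has size $\frac{rn}{2(r+1)}+1$. Here I must be a little careful: unlike Theorem \ref{cycling}, the statement does not assume $\alpha(G)=\frac{rn}{2(r+1)}$, so I cannot immediately derive a contradiction from $|S'|>\frac{rn}{2(r+1)}$. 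The resolution is that the existence of a larger independent set is not itself the contradiction; rather, I should observe that since each vertex of the cycle has degree $r$ in $G$ and degree $2$ inside its cycle, it has exactly $r-2$ neighbors in $S$, and with $r\ge 4$ there is enough room that the common-neighbor situation can be handled directly by a counting/Hall argument rather than by an extremality contradiction.

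Consequently the cleaner route is to verify Hall's condition for $H$ directly. For each cycle component $C:v_1,e_1,v_2,\ldots,v_t,e_t,v_1$ of $G\setminus S$, every $v_k$ has $r-2\ge 2$ neighbors in $S$, and I want to match the $t$ edges $e_1,\ldots,e_t$ to distinct vertices of $S$. Following the final paragraph of Theorem \ref{cycling}, I would match $e_j$ to a neighbor in $S$ of one of its endpoints. The main obstacle, and the place where $r\ge 4$ is genuinely used, is ruling out that distinct edges are forced onto the same vertex of $S$: I must show that for any subset $A$ of edges of $G\setminus S$ one has $|N_H(A)|\ge |A|$. Since each endpoint contributes $r-2$ candidate vertices of $S$ and, by the no-common-neighbor analysis above, these candidate sets are sufficiently disjoint across vertices, a defect-version counting argument forces $|N_H(A)|\ge|A|$. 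I expect the hardest part to be making this disjointness precise without the extremality assumption $\alpha(G)=\frac{rn}{2(r+1)}$, i.e.\ converting the "share a common neighbor'' obstruction into a clean Hall inequality; once that is in place, Lemma \ref{lem1} delivers the $(S_{1,r-1},S)$-decomposition and the proof concludes.
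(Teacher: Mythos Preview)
Your final approach—verifying Hall's condition for $H=(S,L)$ by a direct double-count exploiting $r\ge 4$—is exactly what the paper does; the detour through mimicking Theorem~\ref{cycling} and worrying about the missing extremality hypothesis $\alpha(G)=\frac{rn}{2(r+1)}$ is unnecessary, and the case ``$N_S(u)=N_S(v)=\{s\}$'' you consider cannot even occur since every cycle vertex has $r-2\ge 2$ neighbours in $S$. The paper's count is simply that for each edge $e=uv\in L$ the triangle-freeness of vertices in $S$ forces $N_S(u)\cap N_S(v)=\emptyset$, so $|N_S(\{u,v\})|=2r-4$; then for any $M\subseteq L$ with $|M|=t$ a double count against the degree $r$ of each $s\in S$ gives $|N_H(M)|\ge (2r-4)t/r\ge t$, which is precisely the ``disjointness made precise'' you were anticipating as the hard step.
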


\begin{proof}
We check Hall's condition for $H=(S,L)$ defined in Lemma \ref{lem1}. Let $e=uv \in E (G \setminus S)$. Since $G$ is $r$-regular and $S$ is a cycling set, each end of $e$ has exactly $r-2$ neighbors in $S$. No vertex of $S$ is contained in a triangle and this yields that $u$ and $v$ have no common neighbor in $S$. So, we conclude that $|N_S(\{u,v\})| = 2r-4$. Now, let $M=\{e_1,\ldots,e_t\} \subseteq E(G \setminus S)$ and $V_1$ be the set of all end points of the edges of $M$. We have:
\begin{center}
$|N_S(V_1)| \geq \frac{(2r-4)t}{r}.$
\end{center}
Now, since $r \geq 4$, $2r-4 \geq r$ and this implies that in $H=(S,L)$ we have $|N_S(M)| \geq |M|$, for every $M \subseteq L$ . So, $H=(S,L)$ satisfies the Hall condition and this yields that $G$ is $S_{1, r-1}$-decomposable. 
\end{proof}

We have the following result in regular bipartite graphs which makes a connection between the domination number and the existence of $S_{1, r-1}$-decomposition. The proof is the same as the case of cubic bipartite graphs and we omit it.

\begin{thm}
Let $G$ be a bipartite $r$-regular graph of order $2n$ such that $r+1|n$ and $\gamma(G)=\frac{2n}{r+1}$. Then $G$ is $S_{1, r-1}$-decomposable.
\end{thm}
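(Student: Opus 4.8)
The plan is to mirror the proof given earlier for cubic bipartite graphs, carrying the parameter $r$ through each count. Write $G=(A,B)$; since $G$ is $r$-regular and bipartite, comparing edge counts on the two sides gives $|A|=|B|=n$. The first step is to extract structure from the hypothesis $\gamma(G)=\frac{2n}{r+1}$. This value is the minimum possible, because each closed neighborhood has exactly $r+1$ vertices and hence $|V(G)|\le (r+1)\gamma(G)$. Equality forces the closed neighborhoods $\{N[d]:d\in D\}$ of a minimum dominating set $D$ to partition $V(G)$: that is, $D$ is an independent (efficient) dominating set, and no two vertices of $D$ share a neighbor.

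Next I would split $D=D_1\cup D_2$ disjointly, with $D_1=D\cap A$ and $D_2=D\cap B$, and put $a=|D_1|$, $b=|D_2|$. The partition of closed neighborhoods gives the disjoint decompositions $B=D_2\cup N(D_1)$ and $A=D_1\cup N(D_2)$, where efficiency yields $|N(D_1)|=ra$ and $|N(D_2)|=rb$. Counting each side, $b+ra=n$ and $a+rb=n$; subtracting gives $(b-a)(1-r)=0$, so $a=b=\frac{n}{r+1}$ (an integer since $r+1\mid n$, and we use $r\ge 2$).

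I would then set $S=N(D_1)\subseteq B$, so $|S|=ra=\frac{rn}{r+1}$, which is precisely the number of $r$-vertices required for an $S_{1,r-1}$-decomposition of a graph of order $2n$. After deleting $S$ the surviving part of $B$ is exactly $D_2$, so $E(G\setminus S)$ consists precisely of the edges between $D_2$ and $N(D_2)$. The regularity payoff now appears in a degree count: each $v\in N(D_2)$ is dominated exactly once, hence has a single neighbor in $D_2$ and $r-1$ neighbors in $S$; symmetrically each $s\in S$ has exactly one neighbor in $D_1$ and $r-1$ neighbors in $N(D_2)$. Feeding this into the auxiliary bipartite graph $H=(S,L)$ of Lemma \ref{lem1} --- where $L$ indexes $E(G\setminus S)$ and $s$ is joined to $u_e$ exactly when the $A$-endpoint of $e$ is adjacent to $s$ --- shows that $H$ is $(r-1)$-regular with equal parts $|S|=|L|=\frac{rn}{r+1}$. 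A regular bipartite graph has a perfect matching, so Lemma \ref{lem1} converts this matching into the desired $(S_{1,r-1},S)$-decomposition.

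The only genuine content is the middle bookkeeping: turning the minimality of $\gamma(G)$ into the efficient-domination partition, and verifying the identity $d_S(v)=r-1$ on both sides of $H$. Once $H$ is seen to be regular and bipartite, the existence of the matching --- and hence of the decomposition --- is automatic, so I expect no obstacle beyond keeping the incidences straight.
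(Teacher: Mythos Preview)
Your proposal is correct and follows precisely the route the paper intends: it is the direct generalization of the cubic bipartite argument (Theorem~\ref{main}'s predecessor for $\gamma(G)=n/4$), with $3$ replaced by $r$ throughout, and the paper explicitly omits the proof for that reason. Your bookkeeping---the efficient-domination partition forced by $\gamma(G)=\frac{2n}{r+1}$, the count $a=b=\frac{n}{r+1}$, and the verification that $H=(S,L)$ is $(r-1)$-regular bipartite---matches the paper's cubic argument step for step and is sound.
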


\section{Questions}

In Theorem \ref{main}, we show that every $\{C_3, C_5, C_7\}$-free graph of order $n$ with $\alpha= \frac{3n}{8}$ is $S_{1,2}$-decomposable. In \cite{independent}, it was shown that if $G$ is a planar triangle-free graph with maximum degree at most $3$, then $\alpha(G) \geq \frac{3n}{8}$. There are several examples with $\alpha = \frac{3n}{8}$, containing $C_5$ or $C_7$ and have an $S_{1,2}$-decomposition in \cite{independent}. 
\begin{figure}[h]
\centering{\includegraphics[width=50mm]{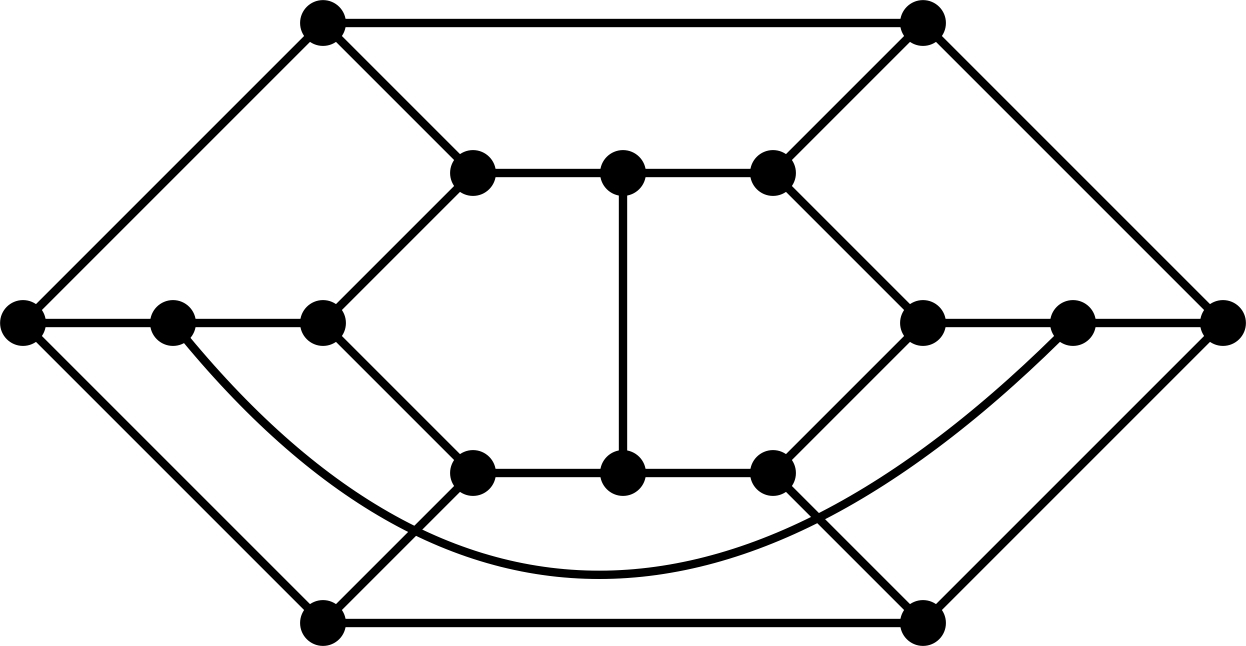}} \\

Figure 3.
\end{figure}
\\
Figure 3 shows that the conditions of being $\{C_5, C_7\}$-free in Theorem \ref{main} are not necessary.
\\
Also, there exists a $\{C_5, C_7\}$-free graph with $\alpha= \frac{3n}{8}$, containing a triangle and has no $S_{1,2}$-decomposition, see Figure 4. 
\begin{figure}[h]
\centering{\includegraphics[width=65mm]{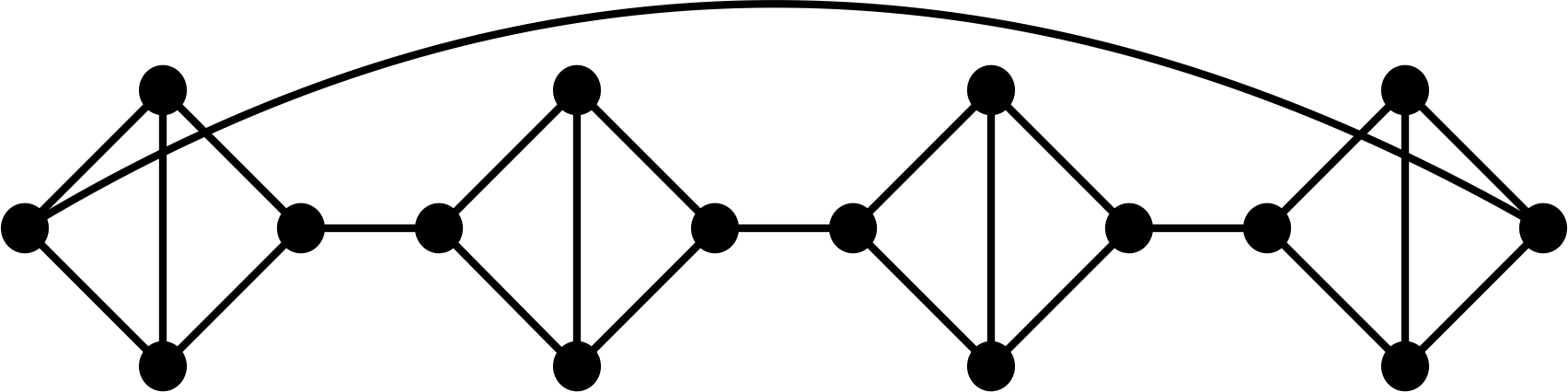}} \\

Figure 4.
\end{figure}
\\
This shows that the condition of being triangle-free in Theorem \ref{main} is necessary. Now, the following question is natural. 
\\
\textbf{Question 1.} Let $G$ be a triangle-free cubic graph of order $n$ with $\alpha(G) =\frac{3n}{8}$. Is it true that $G$ is $S_{1,2}$-decomposable?
\vspace{0.8em}
\\
Figure 2 and Figure 4 show that being connected and 2-connected are not sufficient. Also, Figure 2 shows that there exists a triangle-free connected cubic graph with no $S_{1,2}$-decomposition.
\
\\
\textbf{Question 2.} Does there exist a triangle-free 2-connected cubic graph of order divisible by 8 which has no $S_{1,2}$-decomposition?
\\
\textbf{Question 3.} Does there exist a (triangle-free) 3-connected cubic graph of order divisible by 8 which has no $S_{1,2}$-decomposition?
\vspace{0.6em}
\\
Another open question is as follows.
\vspace{0.6em}
\\
\textbf{Question 4.} Is it true that every bipartite cubic graph of order divisible by 8 is $S_{1,2}$-decomposable?

\end{document}